\documentclass[letterpaper,10pt]{amsart}
\usepackage{amsmath}
\usepackage{calc}
\usepackage[dvipsnames]{xcolor}
\usepackage{accents}
\usepackage[all]{xy}                      %
  
\CompileMatrices                            

\UseTips                                    

\input xypic
\usepackage[bookmarks=true]{hyperref}       

\usepackage{amssymb,latexsym,amsmath,amscd}
\usepackage{xspace}
\usepackage{color}
\usepackage{graphicx}
\usepackage{dsfont}

\reversemarginpar

\theoremstyle{plain}
\newtheorem{theorem}{Theorem}[]
\newtheorem*{theorem*}{Theorem}

\newtheorem{corollary}[theorem]{Corollary}
\newtheorem{lemma}[theorem]{Lemma}

\theoremstyle{definition}

\newtheorem{example}[theorem]{Example}

\newcommand{\enm}[1]{\ensuremath{#1}}          %
\newcommand{\op}[1]{\operatorname{#1}}
\newcommand{\cal}[1]{\mathcal{#1}}

\newcommand{\PP}{\enm{\mathbb{P}}}

\newcommand{\Mm}{\enm{\cal{M}}}

\newcommand{\Oo}{\enm{\cal{O}}}

\newcommand{\Uu}{\enm{\cal{U}}}

\renewcommand{\phi}{\varphi}
\renewcommand{\theta}{\vartheta}
\renewcommand{\epsilon}{\varepsilon}

\newcommand{\Hom}{\op{Hom}}

\newcommand{\End}{\op{End}}

\newcommand{\ce}{\mathrel{\mathop:}=}

\renewcommand{\to}[1][]{\xrightarrow{\ #1\ }}

\newcommand{\old}[1]{}

\date{}

\begin{document}

\date{}
\author{Edoardo Ballico and Elizabeth Gasparim}  
\address{E.B. - Univ. of Trento, Italy; edoardo.ballico@unitn.it;
E.G. - Research Fellow of the Geometry and AI Group,  the Federal Univ. of Technology, Paraná, Brazil; etgasparim@gmail.com}
\title{On the meagerness of the set of irregular bundles on Hopf surfaces}

\maketitle

\begin{abstract} 
Let $\mathcal M_{r,c}$ denote  the moduli space  of stable bundles
with rank $r$ and second Chern class $c>0$ on 
a Hopf surface.
We prove that the subset of  $\mathcal M_{r,c}$  formed by irregular bundles is meager. \\

\end{abstract}


 \section*{Motivation and main result}
 
 Our interest in vector bundles on Hopf surfaces  is motivated 
 by open questions about the moduli spaces of instantons on $S^3\times S^1$ 
 described by  Edward Witten. As a corollary of our result, we  answer in the affirmative 
 the question of connectedness (posed in \cite[Sec.\thinspace 2.1.4] {W})
  of the moduli of $SU(2)$  instantons.

Let $\pi\colon X\to \mathbb P^1$ be a classical Hopf surface.
Hence $X$ is elliptically fibered and  diffeomorphic  to $S^3\times S^1$.
 Let $\mathcal M_{r,c}$ denote  the moduli space  of stable bundles
with rank $r$ and second Chern class $c>0$ on $X$. 
Our main result here is:

\begin{theorem*}
Let $\Uu$ be a connected component of $\Mm_{r,c}$. Then the set of all $E\in \Uu$ which are not regular, i.e. either with jumps or irregular, is meager in $\Uu$
(and hence the set of all $E\in \Uu$ which are regular is dense in every open subset of $\Uu$.
\end{theorem*}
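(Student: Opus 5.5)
The plan is to write the complement of the regular locus as a countable union of closed analytic subsets of $\Uu$, and to show that each of them has empty interior. Since $\Uu$ is a connected complex space, and in particular a Baire space, this gives meagerness. Density of the regular locus in every open subset then follows from the Baire category theorem.

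First I will recall the standard dictionary for bundles on the elliptic fibration $\pi\colon X\to\PP^1$. A bundle $E$ \emph{has a jump} at $x\in\PP^1$ if $E|_{\pi^{-1}(x)}$ is not semistable. A bundle $E$ is \emph{irregular} at $x$ if $E|_{\pi^{-1}(x)}$ is semistable but not regular, meaning that $h^0(\End(E|_{\pi^{-1}(x)}))>r$.

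Let $\mathcal E$ be a local universal family over an open set of $\Uu$; $\Mm_{r,c}$ is only locally fine, but the question is local. By semicontinuity, the following two loci are closed analytic subsets of $\Uu\times\PP^1$:
\[
J=\{(E,x): E|_{\pi^{-1}(x)} \text{ unstable}\},\qquad
I=\{(E,x): h^0(\End(E|_{\pi^{-1}(x)}))>r\}.
\]
The second locus contains the first. Since $\PP^1$ is compact, the projection of $I$ to $\Uu$ is a closed analytic subset $Z\subset\Uu$ (Remmert's proper mapping theorem), and $Z$ is exactly the set of non-regular bundles.

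So the statement reduces to the following claim: if $Z$ is a proper closed analytic subset of every irreducible component of $\Uu$ it meets, then $Z$ is nowhere dense. If instead one only gets countably many such loci, stratified by the type of jump or of the non-regular restriction, then $Z$ is meager, which is exactly what is claimed.

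The key step is to show that $Z$ contains no nonempty open subset of $\Uu$. Equivalently, every stable $E$ admits arbitrarily small deformations that are regular on every fiber.

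I would argue by a dimension count on each irreducible component $W$ of $\Uu$. The tangent space at $E$ is $H^1(\End_0E)$. Since $\chi(\Oo_X)=0$ and $K_X$ is negative along fibers, Riemann--Roch together with stability gives $\dim W\ge \dim H^1(\End_0 E)-\dim H^2(\End_0E)=4rc$-type expected dimension, namely $2rc$ up to normalization. I would then bound the dimension of each stratum of bundles that are irregular at a given fiber, or that have a jump of a given type there. Elementary modifications along a fiber let one compare such bundles with bundles of smaller $c$: a jump of multiplicity $k$ at $x$ costs $c_2$ and lowers the dimension of the family. For irregular but semistable restrictions, the condition $h^0(\End)>r$ imposes at least $2$ conditions at a fixed point $x$, so at least $1$ condition after letting $x$ vary in $\PP^1$. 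These counts, done by induction on $c$ via elementary modifications, should show that every stratum has dimension strictly less than the lower bound for $\dim W$.

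The main obstacle is this inequality when $W$ itself may be of larger than expected dimension: the lower bound for $\dim W$ alone does not exclude $W\subseteq Z$. What I would try first is a deformation argument. Take $E\in W$ general, so that $E$ is a smooth point of $W_{\rm red}$. Consider the map
\[
H^1(\End_0E)\to H^1\bigl(\End_0 E|_{\pi^{-1}(x)}\bigr).
\]
By the exact sequence $0\to\End_0E(-\pi^{-1}(x))\to\End_0E\to\End_0E|_{\pi^{-1}(x)}\to0$, this map is surjective once $H^2(\End_0E(-\pi^{-1}(x)))=0$. That vanishing follows from Serre duality and stability, since $K_X\otimes\Oo(\pi^{-1}(x))$ still has negative degree.

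Surjectivity means that first-order deformations of $E$ induce all deformations of the restriction to the fiber. Since regular bundles are open and dense in the versal deformation of any semistable bundle on an elliptic curve, and unstable ones deform to semistable ones, a general nearby $E'$ is regular along $\pi^{-1}(x)$. Combined with the closedness of $I$ and a countable cover of $\PP^1$, this shows that $Z\cap W$ has empty interior, which completes the proof.
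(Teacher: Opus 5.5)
Your framework matches the paper's, and the ingredients are right:
\begin{itemize}
\item $I\subset\Uu\times\PP^1$ is closed analytic by semicontinuity. It does contain the jump locus, since by Lemma~\ref{i3} every non-regular restriction, semistable or not, has $h^0(T,\End(F))\ge r+2$.
\item $Z=\mathrm{pr}_1(I)$ is closed analytic by properness.
\item The restriction sequence, Serre duality and simplicity of $E$ make $H^1(X,\End(E))\to H^1(T_x,\End(F))$ surjective.
\end{itemize}
The gap is in the final step. Openness and density of regular bundles in the versal deformation of $F=E_{|T_x}$ only show that, for each \emph{fixed} $x$, the locus $A_x$ of those $E$ with $E_{|T_x}$ non-regular has codimension $\ge 1$. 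But $Z=\bigcup_{x\in\PP^1}A_x$ is a union over a one-parameter family, and such a family of hypersurfaces can sweep out an open set. For instance, for $\{(u,x): u=x\}\subset\mathbb{C}\times\PP^1$ every slice is a point, yet the projection is everything. Closedness of $I$ only gives $\dim I\le(\dim\Uu-1)+1=\dim\Uu$, which does not stop $\mathrm{pr}_1(I)$ from containing an open set. A ``countable cover of $\PP^1$'' does not help either, since nothing in your argument makes the relevant set of $x$ countable.

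The missing ingredient is the codimension-two estimate you mentioned in your dimension-count paragraph but neither proved nor used: the non-regular locus in the versal deformation space of $F$ has codimension at least $2$. This is exactly what the paper extracts from Lemma~\ref{i3}. A non-regular $F$, including the unstable case that your elementary-modification sketch does not really handle, has $h^1(T,\End(F))=h^0(T,\End(F))\ge r+2$. Bundles of a fixed Atiyah type vary in families with at most $r$ parameters, one determinant per indecomposable summand. Hence each non-regular stratum has codimension $\ge 2$ in the versal space. Since $\rho$ is surjective and $\Mm_{r,c}$ is smooth, the induced map from a neighborhood of $E$ to the versal space is a submersion. So $A_x$ has codimension $\ge 2$ locally, hence $\dim I\le\dim\Uu-1$, and $Z$ is a proper analytic subset of each component, as required.

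Separately, your worry about components of larger than expected dimension is moot. The same duality argument gives $H^2(X,\End_0(E))=0$, so $\Mm_{r,c}$ is smooth of dimension $2rc$.
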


 \section*{A caveat and some basic concepts}

 Before proving the main theorem, it is worth pointing out why it can not be argued 
 simply by analogy with the algebraic case. Using the concepts of 
 filtrable and unstable bundles, we give  $2$ illustrative examples of new phenomena 
 appearing in the non-K\"ahler setup. 
 
 \begin{example} It is well know that bundles on projective surfaces are filtrable;
 e.g. if $E$ is a rank $2$ bundle on a a smooth projective surface then 
 then it may be written as an extension 
 $$0 \to L \to E \to F\to 0$$ 
 of a rank $1$ sheaf by a line bundle, see \cite[Ch.\thinspace 2, Prop.\thinspace 5]{Fr}.
 
 However, \cite[Thm.\thinspace 3.9]{BG2} proves that filtrable bundles on a classical
  Hopf surface $X$ must have jumps.
 Therefore, the dense open set of $\mathcal M_{r,c}$  formed by stable bundles without jumps 
 consists solely of nonfiltrable bundles. 
 \end{example}

 \begin{example} On \cite[App.\thinspace A]{BGRS} it is 
 shown that the moduli stack of rank $2$ vector bundles on a projective variety  of general type 
 has entire components formed by uniquely of unstable bundles, with some such components
  having dimension strictly larger than that of the moduli space of stable bundles.
  With this in mind, a priori one might have expected that there  also existed  large components
of the moduli of bundles on the Hopf surface which are  formed only  of irregular bundles.
 It is the purpose of this note to prove this is not the case.
 \end{example}
 
  We now establish some notation and basic definitions. 
Fix a Gauduchon metric on the classical Hopf $X$. Stability will always be with respect to this metric.
For each pair of integers $r\ge 2$ and $c>0$, let $\Mm_{r,c}$ denote the moduli space of all 
stable rank $r$ vector bundles $E$ on $X$ such that $c_2(E)=c$ and $\det(E)\cong \Oo_X$.
 It is known that $\Mm_{r,c}$ is smooth and of dimension $2rc$.

 We now recall a couple of elementary  concepts from topology. Let $M$ be a topological space.
 The subspace $A \subset M$ is called nowhere dense in $M$ if $\mbox{int}_M(\overline{A}) = \emptyset$, 
 that is, if the closure of $A$  contains no open subset of $M$. 
 The  set $A$ is called meager if it is a countable union $A = \cup A_n$ of nowhere dense  subsets $A_n \subset M$.

Let $\pi\colon X\to \PP^1$ be the elliptic fibration of the Hopf surface. For each $x\in \PP^1$ set $T_x\ce \pi^{-1}(x)$. 
Denote by $T$ be the elliptic curve such that $\pi$ is a principal $T$-bundle, hence $T_x\cong T$ for all $x\in \PP^1$. Take $E\in \Mm_{r,c}$. We say that $E$ has a jump if there is $x\in \PP^1$ such that $E_{|T_x}$ is not semistable. 

Let $F$ be a rank $r$ semistable vector bundle on $T$ such that $\det(F)\cong \Oo_T$.
 Recall  from \cite[Def.\thinspace 1.12]{FMW} that $F$ is said to be regular 
if its indecomposable factors are all different i.e. associated to pairwise distinct  elements of $\mathrm{Pic}(T)$. Otherwise $F$ is said to be irregular on $T$. Regular bundles present good behaviour inside the moduli spaces, because 
they have the smallest possible amount of automorphisms. 
A bundles $E$ on $X$ is said to be irregular if there exists an $x \in \mathbb P^1$ such that $E\vert_{T_x}$ is irregular. 
Otherwise, $E$ is said to be regular on $X$.

  \section*{The meagerness of the irregular locus}
 
\begin{lemma}\label{i3}
Let $F$ be a rank $r\ge 2$ vector bundle on an elliptic curve $T$ such that $\det(F)\cong \Oo_T$. By Atiyah's classification $F$ is a direct sum of semistable indecomposable ones, $h^0(T,\End(F)) \ge r$ and equality holds if  $F$ is regular. If $F$ is not regular, then $h^0(T,\End(F)) \ge r+2$.
\end{lemma}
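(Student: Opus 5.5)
The argument rests on Atiyah's classification of degree-zero bundles on $T$. Throughout I read $F$ as semistable, as in the definition of regularity above. Since $\det(F)\cong\Oo_T$, every indecomposable summand then has degree $0$. Atiyah's classification gives
$$F\cong \bigoplus_{i=1}^{s} L_i\otimes F_{k_i},\qquad \sum_i k_i=r,$$
where $L_i\in \mathrm{Pic}^0(T)$ and $F_k$ is the unique indecomposable rank $k$ bundle obtained by iterated nontrivial extensions of $\Oo_T$ by itself. Since $\End(F)\cong F^\vee\otimes F$, we get
$$h^0(T,\End(F))=\sum_{i,j} h^0\bigl(T, L_i^{\vee}\otimes L_j\otimes F_{k_i}^{\vee}\otimes F_{k_j}\bigr),$$
and the plan is to estimate each term separately.

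\emph{Off-diagonal terms with $L_i\not\cong L_j$.} Here $F_{k_i}^\vee\otimes F_{k_j}$ has a filtration with all graded pieces $\Oo_T$. Twisting by the nontrivial degree-$0$ line bundle $L_i^\vee\otimes L_j$, every graded piece has $h^0=0$, so the whole term vanishes.

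\emph{Diagonal terms.} I will show $h^0(\End(F_k))=k$. The cleanest route is Atiyah's formulas $F_k^\vee\cong F_k$ and
$$F_k\otimes F_k\cong \bigoplus_{m=1}^{k} F_{2k+1-2m},$$
combined with $h^0(F_n)=1$. The last fact follows by induction from $0\to \Oo_T\to F_n\to F_{n-1}\to 0$, where the connecting map $H^0(F_{n-1})\to H^1(\Oo_T)$ is nonzero because the extension is nonsplit. If one prefers to avoid the tensor product formula, an alternative is to use that $\End(F_k)$ contains the $k$ linearly independent endomorphisms $1,N,\dots,N^{k-1}$, with $N$ the nilpotent endomorphism $F_k\to F_{k-1}\hookrightarrow F_k$. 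The upper bound $h^0(\End(F_k))\le k$ then comes from applying $\Hom(F_k,-)$ to the same filtration and inducting.

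Summing the diagonal terms gives $\sum_i k_i=r$, which proves $h^0(\End(F))\ge r$ for every such $F$. If $F$ is regular, the $L_i$ are pairwise distinct, so all off-diagonal terms vanish and equality holds.

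\emph{Irregular case.} Suppose $L_i\cong L_j$ for some $i\ne j$. Then both ordered pairs $(i,j)$ and $(j,i)$ contribute $h^0(\Hom(F_{k_i},F_{k_j}))\ge 1$ (resp. with $i,j$ swapped). A nonzero map is given explicitly by the composite
$$F_{k_i}\twoheadrightarrow \Oo_T\hookrightarrow F_{k_j},$$
using the natural quotient of $F_{k_i}$ onto $\Oo_T$ and the sub $\Oo_T\subset F_{k_j}$. Adding these two contributions to the diagonal sum $r$, and using that all other terms are $\ge 0$, gives $h^0(\End(F))\ge r+2$. (In fact each such term equals $\min(k_i,k_j)$, but the lower bound $1$ suffices.)

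The main obstacle is the exact computation $h^0(\End(F_k))=k$: the lower bounds are formal, but the equality in the regular case needs the upper bound. I would take it from Atiyah's multiplication formula for the $F_k$, and fall back on the inductive $\Hom$-filtration argument if a self-contained proof is wanted.
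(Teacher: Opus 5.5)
\textbf{The gap: you only prove the lemma for semistable $F$.} Your treatment of semistable $F$ is correct and is essentially the paper's argument. Cross terms vanish between summands with different $L_i$. Each diagonal block contributes $h^0(\End(F_k))=k$; the paper quotes this from Friedman--Morgan--Witten, Lemma 1.13, while you derive it from Atiyah's tensor formula. In the irregular case, the two nonzero maps $F_{k_i}\to\Oo_T\to F_{k_j}$ and back add $2$.

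The problem is your opening decision to read $F$ as semistable. The lemma assumes only $\det(F)\cong\Oo_T$. The clause about Atiyah's classification says the indecomposable \emph{summands} are semistable, not $F$ itself. In the paper, ``not regular'' explicitly includes unstable $F$, and its proof starts with exactly that case. This matters: in Theorem~\ref{i1} the set $A_x$ contains every $E$ with a jump at $x$. The codimension-$2$ bound for that part of $A_x$ rests on $h^0(T_x,\End(E_{|T_x}))\ge r+2$ for unstable restrictions. As it stands, your proposal proves neither $h^0(\End(F))\ge r$ nor $h^0(\End(F))\ge r+2$ when $F$ is unstable.

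\textbf{How to close it.} The missing case is not hard, but it needs its own argument. Write $F=\bigoplus_i F_i$ with each $F_i$ indecomposable, hence semistable, of rank $r_i$ and degree $d_i$. If $F$ is unstable, not all $d_i$ vanish. Since $\sum_i d_i=0$, there is a summand $F_u$ with $d_u<0$ and a summand $F_v$ with $d_v>0$.

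Whenever $d_b<0\le d_a$, the bundle $\Hom(F_b,F_a)$ is semistable of positive slope. So its $h^1$ vanishes and $h^0(\Hom(F_b,F_a))=d_ar_b-d_br_a$. This is $\ge r_a$, and $\ge r_a+r_b$ if $d_a>0$.

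Now take the following blocks of $\End(F)$:
\begin{itemize}
\item $\Hom(F_u,F_a)$ for all $a$ with $d_a\ge 0$;
\item $\Hom(F_b,F_v)$ for all $b\ne u$ with $d_b<0$;
\item the identities of $F_u$ and $F_v$.
\end{itemize}
These are pairwise distinct direct summands of $\End(F)$. The first family contributes at least $\sum_{d_a\ge0}r_a+r_u$, where the extra $r_u$ comes from $a=v$. The second contributes at least $\sum_{d_b<0,\,b\ne u}r_b$. The identities add $2$. The total is at least $r+2$.

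The paper does the same count after grouping the summands as $F[-1]\oplus F[0]\oplus F[1]$ according to the sign of the degree.
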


\begin{proof}
For the first statement is proved in \cite[Lem.\thinspace 1.13]{FMW}. Now assume that $F$ is not regular. In particular, $F$ is decomposable. Write $F=F_1\oplus \cdots \oplus F_s$ with each $F_i$ indecomposable. Let $r_i$ be the rank of $F_i$. Set $d_i:= \deg(F_i)$. 

First, assume that $F$ is not semistable. Since $\det(F)$ has degree $0$, there is $i\in \{1,\dots,s\}$ such that $d_i>0$. Since $d_1+\cdots +d_s=0$, there is an index $j$ such that $d_j<0$. Let $e$ (resp.~$f$) be the number of factors of $F$ with positive (resp.~negative) degree. We have $e>0$, $f>0$ and $e+f\le s$. 
Permuting the factors of $F$ we may assume $d_a\ge d_b$ for all $a\ge b$. For all $a, b\in \{1,\dots,s\}$ the vector bundle $\Hom(F_b,F_a)$ is semistable of rank $r_ar_b$ and of degree $d_ar_b -d_br_a$. Fix $a,b\in \{1,\dots ,s\}$ such that $d_a>d_b$. Since $\Hom(F_b,F_a)$ is semistable of rank $r_ar_b$ and of degree $d_ar_b -d_br_a>0$, we have $h^0(T,\Hom(F_b,F_a)) =d_ar_b-d_br_a$. Write $F =F[-1]\oplus F[0]\oplus F[1]$ with $F[0]$ the direct sum of the degree $0$ factors of $F$, if any, and $F[-1]$ the direct sum of the negative degree factors and $F[1]$ the sum of the positive degree factors of $F$. Set $a_i:= \mathrm{rank}(F[i])$. We have $a_0+a_1+a_{-1}=r$, $a_0\ge 0$, $a_1\ge 1$ and $a_2\ge 1$. Since each factor of $F[1]$ has degree $\ge 1$ and each factor of $F[-1]$ has degree $\le -1$, we have $h^0(T,\Hom(F[-1],F[1]) \ge a_1+a_{-1}$. We also have $h^0(T,\Hom(F[-1],F[0]) \ge a_0$. Since $H^0(T,\Hom(F,F))$ contains also $H^0(T,\Hom(F[-1],F[-1])$ and $H^0(T,\Hom(F[1],F[1])$ and these vector spaces are non-zero (containing the multiplication by constants), we get $h^0(T,\Hom(F,F)) \ge a_1+a_0+a_2+2=r+2$.

Now assume that $F$ is semistable. Hence (with the notation of \cite[\S 1]{FMW}) $F_i\cong I_{r_i}(\lambda_i)$ for some $\lambda_i\in T^\ast$. By \cite[Lemma 1.13]{FMW} we have $h^0(T,\End(F_i)) =r_i$. Hence $h^0(T,\End(F)) =r +\sum_{i\ne j} h^0(T,\Hom(F_i,F_j))$ (in the sum we take all $i<j$ and all $i>j$).
Since $F$ is irregular, there are $i, j$ such that $i\ne j$ and $\lambda_i =\lambda_j$. Hence $h^0(T,\Hom(F_i,F_j))\ge 1$ and $h^0(T,\Hom(F_j,F_i))\ge 1$.
We conclude that $h^0(T,\End(F)) \ge r+2$.
\end{proof}

\begin{theorem}\label{i1}
Let $\Uu$ be a connected component of $\Mm_{r,c}$. Then the set of all $E\in \Uu$ which are not regular, i.e. either with jumps or irregular, is meager in $\Uu$
(and hence the set of all $E\in \Uu$ which are regular is dense in every open subset of $\Uu$.
\end{theorem}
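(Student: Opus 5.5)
The plan is to show that the non-regular locus of $\Uu$ is a countable union of closed analytic subsets of positive codimension. Since $\Uu$ is a connected complex manifold of dimension $2rc$, each such subset is nowhere dense, and Baire's theorem then gives both meagerness and density of the regular locus.

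\textbf{Step 1: a semicontinuity set-up.} Let $\mathcal E$ be a universal (or local universal) family over $\Uu\times X$, which exists locally on $\Uu$ since stable bundles are simple. Consider the projection $\Uu\times X\to \Uu\times\PP^1$ and the relative endomorphism sheaf $\End(\mathcal E)$. By upper semicontinuity of $h^0$ in flat families, the set
$$Z\ce\{(E,x)\in \Uu\times\PP^1 : h^0(T_x,\End(E_{|T_x}))\ge r+1\}$$
is a closed analytic subset. By Lemma \ref{i3}, $(E,x)\in Z$ exactly when $E_{|T_x}$ is not regular; unstable restrictions (jumps) are covered by the first case of that lemma. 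The non-regular locus of $\Uu$ is $\pi_1(Z)$. Since $\PP^1$ is compact, $\pi_1$ is proper, so by Remmert's theorem $\pi_1(Z)$ is a closed analytic subset of $\Uu$. It therefore suffices to show $\pi_1(Z)\ne\Uu$, i.e.\ $\dim \pi_1(Z)<2rc$. A proper closed analytic subset of a connected manifold is nowhere dense, and in particular meager. If one prefers to avoid global analytic arguments, cover $\Uu$ by countably many charts (it is second countable) and argue chart by chart; this is where ``countable union'' enters.

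\textbf{Step 2: dimension count (main obstacle).} Stratify $Z$ by the value $k=h^0(T_x,\End(E_{|T_x}))\ge r+2$. We need $\dim Z_k\le 2rc-2$, so that $\dim\pi_1(Z)\le 2rc-1$. I would bound the dimension of the fibre of $Z\to\PP^1$ over a fixed $x$ by deformation theory. The condition that $E_{|T_x}$ lies in a fixed non-regular isomorphism class stratum $S$ of bundles on $T_x$ has codimension at least $\dim$ of the normal space to $S$ in the versal deformation of $F=E_{|T_x}$. The versal deformation space of $F$ is $H^1(T_x,\End F)$, of dimension $k$ by Riemann--Roch. The stratum of bundles isomorphic to $F$ has codimension $h^0(\End F)=k$ there, while the family of non-regular types near $F$ has dimension at most $k-2$ less than the regular ones. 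More simply: regular semistable bundles with trivial determinant form an $(r-1)$-dimensional family, non-regular ones form a family of dimension $\le r-2$, and the codimension of a non-regular stratum is at least $k-(r-1)+\ldots\ge 2$.

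The key input is surjectivity of the restriction map $H^1(X,\End_0 E)\to H^1(T_x,\End_0 E_{|T_x})$. This follows from $H^2(X,\End_0E(-T_x))=H^0(X,\End_0E\otimes K_X(T_x))^\vee$, which vanishes by stability, since $K_X(T_x)$ has negative degree with respect to the Gauduchon metric. This surjectivity is the step I expect to require the most care, especially for jumps where $E_{|T_x}$ is unstable. Granting it, the locus of $E$ near a given point with $E_{|T_x}$ non-regular has codimension $\ge 2$ for fixed $x$. Letting $x$ vary over the $1$-dimensional $\PP^1$ gives codimension $\ge 1$ for $\pi_1(Z)$, which concludes the proof.
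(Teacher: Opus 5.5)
Your proposal is essentially the paper's proof. Both use surjectivity of $H^1(X,\End E)\to H^1(T_x,\End(E_{|T_x}))$ (from Serre duality plus stability or simplicity), then Lemma~\ref{i3} in the versal deformation of $E_{|T_x}$ to get codimension $\ge 2$ for each fixed $x$, then lose one dimension as $x$ moves in $\PP^1$. Your semicontinuity-plus-Remmert packaging is slightly tidier than the paper's, and it even shows the non-regular locus is a proper closed analytic, hence nowhere dense, subset. Your use of $\End_0$, which matches the fixed determinant, is also slightly tidier.

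Two small corrections. First, what you need is $\dim\bigl(Z_k\cap(\Uu\times\{x\})\bigr)\le 2rc-2$ for each $x$, so $\dim Z_k\le 2rc-1$, not $\dim Z_k\le 2rc-2$. Second, surjectivity is purely cohomological and indifferent to jumps. The place where unstable restrictions need care, left implicit in the paper as well, is that each non-regular isomorphism type with trivial determinant moves in at most $r-1$ parameters by Atiyah's classification, so $h^0(\End_0)\ge r+1$ gives codimension $\ge 2$ in the versal deformation.
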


\begin{proof}
Let $A$ be the set of all $E\in \Uu$ which are not regular, i.e. such that there is $x\in \PP^1$ for which $E_{|F_x}$ is not regular. For each $x\in \PP^1$, let  $A_x$ denote the set of all $E\in \Uu$ such that $E_{|T_x}$ is not regular.
Since $\dim \PP^1 =1$, it is sufficient to prove that each $A_x$ is locally, around each of its points,
 a closed analytic subset of $\Uu$ of codimension at least
$2$. Fix $x\in \PP^1$ and $E\in A_x$. Set $F:= E_{|T_x}$. Consider the exact sequence
\begin{equation}\label{eqd1}
0\to \End(E)(-T_x)\to \End(E)\to \End(F)\to 0 \text{.}
\end{equation}
Duality gives $$h^2(X,\End(E)(-T_x)) =h^0(X,\End(E)(+T_x)\otimes \omega_X) = h^0(X,\End(E)(-T_x)).$$ 
Since $E$ is simple, each element of $H^0(\End(E))$ is induced by the multiplication by a constant. Therefore, $h^0(X,\End(E)(-T_x))=0$.

Thus, the linear map $\rho\colon H^1(X,\End(E))\to H^1(T_x,\End(F))$ is surjective, and an open neighborhood
 of $0\in H^1(X,\End(E))$ is biholomorphic to an open neighborhood of $E$ in $\Mm_{r,c}$. An open neighborhood of $0$ in $H^1(T_x,\End(F))$ is a versal deformation space for $F$. Since $\rho$ is surjective, Lemma \ref{i3} gives that 
$A_x$ has locally at $E$ at least codimension $2$ in $\Uu$.
\end{proof}

The following corollary follows implicitly from  \cite{BH} and is discussed in further detail in \cite{G}, 
where it is proved under the assumption that each component of the moduli $\mathcal M_{2,c}$
contains a stable bundle. Our Theorem \ref{i1} shows that such an assumption is a truism.

\begin{corollary}\label{i2}
Take $r=2$. The complex manifold $\Mm_{2,c}$ is connected.
\end{corollary}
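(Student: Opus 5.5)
The strategy is to derive connectedness of $\Mm_{2,c}$ from Theorem \ref{i1}, combined with the known description of rank $2$ bundles without jumps on $X$ through spectral curves, as in \cite{BH} and \cite{G}. Let $\Uu$ be any connected component of $\Mm_{2,c}$.

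By Theorem \ref{i1}, the regular bundles in $\Uu$ form a dense subset. In particular $\Uu$ contains a stable bundle $E$ whose restriction to every fiber $T_x$ is semistable and regular. This is precisely the standing hypothesis under which \cite{G} proves connectedness: every component of the moduli space contains a stable bundle of this generic kind. Note that $\Mm_{2,c}$ is by definition made of stable bundles, so the content needed is that each component meets the good locus, and the theorem supplies this.

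For rank $2$ with trivial determinant and no jumps, the restriction $E_{|T_x}$ is semistable and of the form $\lambda\oplus\lambda^{-1}$ (or a nontrivial extension when $\lambda=\lambda^{-1}$). The pairs $\{\lambda,\lambda^{-1}\}$ determine a spectral curve $C\subset X^\ast:=\PP^1\times (T^\ast/\pm)\cong \PP^1\times\PP^1$. This curve is a bisection in a linear system fixed by $c$. The regular bundles with no jumps with a given spectral curve are then parametrized by (a torsor over) the Prym-type variety of the double cover determined by $C$, which is connected. The parameter space for the spectral curves is an open subset of a projective space, hence connected. Consequently the regular, jump-free stable bundles form a connected set $R\subset\Mm_{2,c}$, or at least a set fibering with connected fibers over a connected base. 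It is therefore contained in a single component $\Uu_0$.

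Finally, $\Uu$ meets $R$ by the density statement, since regular bundles are dense in $\Uu$ and $R$ is open in the regular locus. Hence $\Uu=\Uu_0$, so $\Mm_{2,c}$ is connected. The main obstacle is the middle step: establishing that $R$ is connected. This needs the spectral correspondence of \cite{BH} to be a fibration with connected fibers over a connected open subset of the linear system of spectral curves, including the boundary behavior where the spectral curve meets the branch locus. I would simply cite \cite{BH,G} for this step, since the corollary statement indicates that the paper relies on those references.
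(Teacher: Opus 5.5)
Your proposal is correct and is essentially the paper's proof. Theorem \ref{i1} forces every component to contain a regular bundle, and the regular locus is connected because it fibres, with connected fibres, over an irreducible Zariski-open set of graphs in $\PP^{2n+1}$ ($n=c$); the paper takes these facts from \cite{BG} just as you propose to do.

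The only slip is in the geometry. The curve in $\PP^1\times(T^\ast/\pm)\cong\PP^1\times\PP^1$ given by the pairs $\{\lambda,\lambda^{-1}\}$ is a section (the paper's graph), not a bisection, and its double cover in $\PP^1\times T^\ast$ is the genus $2n-1$ spectral curve. Your ``Prym-type'' fibre is therefore just that curve's Jacobian, and the paper invokes \cite[Lem.\thinspace 1]{BG} to ensure the whole fibre lies in the regular locus.
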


\begin{proof}
By Theorem \ref{i1} it is sufficient to prove that the set of all regular $E\in \Mm_{c,2}$ it is connected. 
Recall that to each  $E\in \Mm_{2,c}$ there corresponds a  spectral curve $C_E$, which 
 when $E$ is regular  is a smooth and connected curve of genus $2n-1$ \cite[Thm.\thinspace 1]{BG}.
To each  $E\in \Mm_{2,c}$ there also corresponds a graph $G_E$, and each graph is associated to a unique spectral curve.
 
 The set of all graphs associated to at least one regular bundle is an irreducible Zariski open subset of $\PP^{2n+1}$.
  Furthermore,  for each spectral curve $C_E$ associated to a regular bundle, all bundles $F$ 
  having  $C_E=C_F$ are regular \cite[Lem.\thinspace 1]{BG}.
  Moreover,  the set of such bundles (i.e. the fibre of the graph map $G^{-1}(E)$) is
   parametrized, by a smooth and connected projective variety, the Jacobian of the smooth genus $2n-1$ curve $C_E$.
\end{proof}

\paragraph{\bf Acknowledgements}  
 We are grateful to E. Witten for enlightening correspondence.
 E. Gasparim thanks Carlos Varea, director of mathematics of  Universidade Tecnológica 
 Federal do Paraná, Cornélio Procópio (Brazil), for  the kind offer of membership in the 
research group on Geometry and AI. 
 E. Ballico is a member of  GNSAGA of INdAM (Italy). 
 E. Gasparim is a senior associate of the Abdus Salam International 
 Centre for Theoretical Physics, Trieste (Italy) and thanks the hospitality of the ICTP
 where this note was completed.

\end{document}